\documentclass[11pt]{article}

\usepackage{amssymb}
\usepackage{amsmath}
\usepackage{amsthm}
\usepackage{mathdots}
\usepackage{xcolor}
\usepackage{graphicx}
\usepackage{tikz-cd}
\usepackage{pgf,acadpgf}
\usepackage{enumerate}

\long\def\commentout#1{}

\numberwithin{equation}{section}

\newtheoremstyle{slplain}
  {\topsep}
  {\topsep}
  {\slshape}
  {0pt}
  {\bfseries}
  {.}
  {0.5em}
  {}

\theoremstyle{slplain}

  \newtheorem*{THMC}{Theorem C}
  \newtheorem*{THMD}{Theorem D}
  \newtheorem*{THME}{Theorem E}
  \newtheorem*{LEMA}{Lemma A}
  \newtheorem*{LEMB}{Lemma B}

\theoremstyle{definition}

\newcommand{\Fraisse}{Fra\"\i ss\'e}

\newcommand{\lex}{\le_{\mathit{lex}}}
\newcommand{\alex}{\le_{\mathit{alex}}}
\newcommand{\refl}{{\mathit{refl}}}

\newcommand\nlongrightarrow{\longrightarrow\kern -1.45em/\kern 0.9em}

\renewcommand{\preceq}{\preccurlyeq}

\renewcommand{\le}{\leqslant}
\renewcommand{\ge}{\geqslant}

\newcommand{\0}{\varnothing}

\renewcommand{\phi}{\varphi}
\renewcommand{\epsilon}{\varepsilon}

\newcommand{\KK}{\mathbf{K}}

\newcommand{\NN}{\mathbb{N}}

\newcommand{\QQ}{\mathbb{Q}}

\newcommand{\Boxed}[1]{\mbox{$#1$}}

\newcommand{\Age}{\mathrm{Age}}

\newcommand{\calA}{\mathcal{A}}
\newcommand{\calB}{\mathcal{B}}
\newcommand{\calC}{\mathcal{C}}
\newcommand{\calD}{\mathcal{D}}

\newcommand{\calK}{\mathcal{K}}

\newcommand{\calQ}{\mathcal{Q}}

\newcommand{\calS}{\mathcal{S}}

\title{Corrigendum and Addendum to\\``\Fraisse's Conjecture and big Ramsey degrees of structures admitting finite monomorphic decomposition''}
\author{%
  Dragan Ma\v sulovi\'c (corresponding author)\\
  University of Novi Sad, Faculty of Sciences\\
  Department of Mathematics and Informatics\\
  Trg Dositeja Obradovi\'ca 3, 21000 Novi Sad, Serbia\\
  e-mail: dragan.masulovic@dmi.uns.ac.rs
\and
  Veljko Tolji\'c\\
  University of Novi Sad, Faculty of Sciences\\
  Department of Mathematics and Informatics\\
  Trg Dositeja Obradovi\'ca 3, 21000 Novi Sad, Serbia\\
  e-mail: veljko.toljic@dmi.uns.ac.rs
}

\begin{document}
\maketitle

\begin{abstract}
  In Section 6 of the paper ``\Fraisse's Conjecture and big Ramsey degrees of structures admitting finite monomorphic decomposition'',
  we applied the methods developed in earlier sections to show that a certain reduct of the generic permutation has
  finite big Ramsey degrees. Unfortunately, this reduct was incorrectly identified as the generic partial order.
  We are grateful to Jan Hubi\v cka for bringing this error to our attention.

  In this note we correct the statements that rely on this misidentification and demonstrate that the reduct in question is
  in fact the generic \emph{2-dimensional} partial order. We emphasize that the arguments presented in Section 6 remain valid,
  with the sole exception of the Claim in the proof of Theorem 6.4, whose role was to (incorrectly) identify the
  reduct of the generic permutation as the generic partial order.

  This correction has an unexpected positive consequence. Rather than reproving a well-known result whose existing proof
  is already notably elegant, this note demonstrates that our general framework can be used to establish that a previously unexplored
  class of generic relational structures has finite big Ramsey degrees. This observation opens a potentially new
  direction for further research in the thriving area of big Ramsey combinatorics.
  
  In the addendum, we combine a recent result by Oudrar and Pouzet with our
  analysis of finite big Ramsey degrees for structures admitting finite monomorphic decomposition
  to characterize the existence of finite Big Ramsey degrees for all countable relational structures
  whose language has a linear order and age has polynomial growth. 
  
  A revised version of the paper 
  ``\Fraisse's Conjecture and big Ramsey degrees of structures admitting finite monomorphic decomposition''
  incorporating all additions and corrections addressed here is available as arXiv:2407.20307.
\end{abstract}

\section*{Corrigendum}

\noindent$\bullet$
Last sentence of the Abstract:
replace ``provide an alternative proof of Hubi\v cka's result'' with ``prove'';
after ``generic'' add ``2-dimensional''.

\bigskip

\noindent$\bullet$
Last paragraph of the Introduction:
replace ``provide an alternative proof of Hubi\v cka's result'' with ``prove'';
after ``generic'' add ``2-di\-men\-si\-o\-nal''; delete reference ``[10]''.

\bigskip

\noindent$\bullet$
Title of Section 6: after ``generic'' add ``2-di\-men\-si\-o\-nal''

\bigskip

\noindent$\bullet$
First paragraph of Section 6:
replace ``provide an alternative proof of the result of Hubi\v cka from~[10]'' with ``prove'';
after ``generic'' add ``2-di\-men\-si\-o\-nal'';
delete ``Instead of Voigt's infinite $\star$-version of the
Graham-Rothschild's theorem (see [19, Theorem~A]),'';
replace ``our'' with ``Our''.

\bigskip

\noindent$\bullet$
Third paragraph of Section 6:
after ``from the rationals $\QQ$ to the generic'' add ``2-dimensional''.

\bigskip

\noindent$\bullet$
Fourth paragraph of Section 6:
after ``We then use the fact that the generic'' add ``2-dimensional'';
after ``from the generic permutation to the generic'' add ``2-dimensional''.

\bigskip

\noindent$\bullet$
Formulation of Theorem 6.4: delete reference ``[10]'';
after ``generic'' add ``2-dimensional''.

\bigskip

\noindent$\bullet$
Proof of Theorem 6.4, second paragraph: delete entire Claim; delete ``Proof. Straightforward.''

\bigskip

\noindent$\bullet$
Proof of Theorem 6.4, last paragraph:
delete ``Therefore,'';
replace ``if'' with ``If'';
after ``is precisely the class of all finite or countably infinite'' add ``2-dimensional'';
replace ``Moreover,'' with ``In what follows, we are going to prove that'';
after ``$(\QQ, \Boxed{\preccurlyeq})$ is the generic'' add ``2-dimensional'';
after ``Theorem 6.3 immediately yields that the generic'' add ``2-dimensional''.

\bigskip

\noindent$\bullet$
After Theorem 6.4 add the following:

\bigskip

The \emph{dimension} of a finite partially ordered set $(P, \Boxed{\preceq})$ was introduced in 1941
by Dushnik and Miller~\cite{dushnik-miller-1941} as the least number of linear extensions of $\preceq$
whose intersection is $\preceq$. Equivalently, the dimension of $(P, \Boxed{\preceq})$ is the least
number $d$ of finite linear orders $(L_1, \le_1)$, \ldots, $(L_d, \le_d)$ such that
$(P, \Boxed{\preceq})$ embeds into $(L_1, \le_1) \times \ldots \times (L_d, \le_d)$ (see \cite{ore-1962}).
A finite partially ordered set $(P, \Boxed{\preceq})$ is \emph{at most 2-dimensional} if its dimension is $\le 2$.

\begin{figure}
  \centering
\begin{pgfpicture}
  \pgfsetxvec{\pgfpoint{\acadpgfunit}{0pt}}
  \pgfsetyvec{\pgfpoint{0pt}{\acadpgfunit}}
  \pgfsetlinewidth{\acadpgflinewidth}
  \pgftransformshift{\pgfpointxy{-237.5}{-212.5}}

  \begin{pgfscope}
    \pgfpathmoveto{\pgfpointxy{1050.0}{475.0}}
    \pgfpathlineto{\pgfpointxy{1050.0}{525.0}}
    \pgfusepath{stroke}
  \end{pgfscope}
  \begin{pgfscope}
    \pgfpathmoveto{\pgfpointxy{1065.0}{530.0}}
    \pgfpathlineto{\pgfpointxy{1110.0}{470.0}}
    \pgfusepath{stroke}
  \end{pgfscope}
  \begin{pgfscope}
    \pgfpathmoveto{\pgfpointxy{1140.0}{470.0}}
    \pgfpathlineto{\pgfpointxy{1185.0}{530.0}}
    \pgfusepath{stroke}
  \end{pgfscope}
  \begin{pgfscope}
    \pgfpathmoveto{\pgfpointxy{1200.0}{525.0}}
    \pgfpathlineto{\pgfpointxy{1200.0}{475.0}}
    \pgfusepath{stroke}
  \end{pgfscope}
  \begin{pgfscope}
    \pgfpathmoveto{\pgfpointxy{1185.0}{470.0}}
    \pgfpathlineto{\pgfpointxy{1140.0}{530.0}}
    \pgfusepath{stroke}
  \end{pgfscope}
  \begin{pgfscope}
    \pgfpathmoveto{\pgfpointxy{1110.0}{530.0}}
    \pgfpathlineto{\pgfpointxy{1065.0}{470.0}}
    \pgfusepath{stroke}
  \end{pgfscope}
  \begin{pgfscope}
    \pgfpathmoveto{\pgfpointxy{785.0}{370.0}}
    \pgfpathlineto{\pgfpointxy{740.0}{430.0}}
    \pgfusepath{stroke}
  \end{pgfscope}
  \begin{pgfscope}
    \pgfpathmoveto{\pgfpointxy{710.0}{430.0}}
    \pgfpathlineto{\pgfpointxy{665.0}{370.0}}
    \pgfusepath{stroke}
  \end{pgfscope}
  \begin{pgfscope}
    \pgfpathmoveto{\pgfpointxy{650.0}{575.0}}
    \pgfpathlineto{\pgfpointxy{650.0}{625.0}}
    \pgfusepath{stroke}
  \end{pgfscope}
  \begin{pgfscope}
    \pgfpathmoveto{\pgfpointxy{665.0}{630.0}}
    \pgfpathlineto{\pgfpointxy{710.0}{570.0}}
    \pgfusepath{stroke}
  \end{pgfscope}
  \begin{pgfscope}
    \pgfpathmoveto{\pgfpointxy{740.0}{570.0}}
    \pgfpathlineto{\pgfpointxy{785.0}{630.0}}
    \pgfusepath{stroke}
  \end{pgfscope}
  \begin{pgfscope}
    \pgfpathmoveto{\pgfpointxy{800.0}{625.0}}
    \pgfpathlineto{\pgfpointxy{800.0}{575.0}}
    \pgfusepath{stroke}
  \end{pgfscope}
  \begin{pgfscope}
    \pgfpathmoveto{\pgfpointxy{1000.0}{625.0}}
    \pgfpathlineto{\pgfpointxy{1250.0}{625.0}}
    \pgfpathlineto{\pgfpointxy{1250.0}{350.0}}
    \pgfpathlineto{\pgfpointxy{1000.0}{350.0}}
    \pgfpathclose
    \pgfusepath{stroke}
  \end{pgfscope}
  \begin{pgfscope}
    \pgfpathmoveto{\pgfpointxy{484.487}{556.936}}
    \pgfpathlineto{\pgfpointxy{596.987}{606.936}}
    \pgfusepath{stroke}
  \end{pgfscope}
  \begin{pgfscope}
    \pgfpathmoveto{\pgfpointxy{479.802}{567.478}}
    \pgfpatharcaxes{113.962}{293.962}{\pgfpointxy{5.7681}{0.0}}{\pgfpointxy{0.0}{5.7681}}
    \pgfusepath{stroke}
  \end{pgfscope}
  \begin{pgfscope}
    \pgfpathmoveto{\pgfpointxy{573.978}{603.307}}
    \pgfpatharcaxes{263.962}{293.962}{\pgfpointxy{45.0}{0.0}}{\pgfpointxy{0.0}{45.0}}
    \pgfusepath{stroke}
  \end{pgfscope}
  \begin{pgfscope}
    \pgfpathmoveto{\pgfpointxy{596.987}{606.936}}
    \pgfpatharcaxes{113.962}{143.962}{\pgfpointxy{45.0}{0.0}}{\pgfpointxy{0.0}{45.0}}
    \pgfusepath{stroke}
  \end{pgfscope}
  \begin{pgfscope}
    \pgfpathmoveto{\pgfpointxy{484.487}{443.064}}
    \pgfpathlineto{\pgfpointxy{596.987}{393.064}}
    \pgfusepath{stroke}
  \end{pgfscope}
  \begin{pgfscope}
    \pgfpathmoveto{\pgfpointxy{484.487}{443.064}}
    \pgfpatharcaxes{66.0375}{246.038}{\pgfpointxy{5.7681}{0.0}}{\pgfpointxy{0.0}{5.7681}}
    \pgfusepath{stroke}
  \end{pgfscope}
  \begin{pgfscope}
    \pgfpathmoveto{\pgfpointxy{596.987}{393.064}}
    \pgfpatharcaxes{66.0375}{96.0375}{\pgfpointxy{45.0}{0.0}}{\pgfpointxy{0.0}{45.0}}
    \pgfusepath{stroke}
  \end{pgfscope}
  \begin{pgfscope}
    \pgfpathmoveto{\pgfpointxy{578.875}{407.711}}
    \pgfpatharcaxes{216.038}{246.038}{\pgfpointxy{45.0}{0.0}}{\pgfpointxy{0.0}{45.0}}
    \pgfusepath{stroke}
  \end{pgfscope}
  \begin{pgfscope}
    \pgfsetdash{{2.5pt}{1.5pt}}{0pt}
    \pgfpathmoveto{\pgfpointxy{846.987}{605.564}}
    \pgfpathlineto{\pgfpointxy{959.487}{555.564}}
    \pgfusepath{stroke}
  \end{pgfscope}
  \begin{pgfscope}
    \pgfpathmoveto{\pgfpointxy{851.672}{616.106}}
    \pgfpatharcaxes{66.0375}{246.038}{\pgfpointxy{5.7681}{0.0}}{\pgfpointxy{0.0}{5.7681}}
    \pgfusepath{stroke}
  \end{pgfscope}
  \begin{pgfscope}
    \pgfpathmoveto{\pgfpointxy{959.487}{555.564}}
    \pgfpatharcaxes{66.0375}{96.0375}{\pgfpointxy{45.0}{0.0}}{\pgfpointxy{0.0}{45.0}}
    \pgfusepath{stroke}
  \end{pgfscope}
  \begin{pgfscope}
    \pgfpathmoveto{\pgfpointxy{941.375}{570.211}}
    \pgfpatharcaxes{216.038}{246.038}{\pgfpointxy{45.0}{0.0}}{\pgfpointxy{0.0}{45.0}}
    \pgfusepath{stroke}
  \end{pgfscope}
  \begin{pgfscope}
    \pgfsetdash{{2.5pt}{1.5pt}}{0pt}
    \pgfpathmoveto{\pgfpointxy{846.987}{381.936}}
    \pgfpathlineto{\pgfpointxy{959.487}{431.936}}
    \pgfusepath{stroke}
  \end{pgfscope}
  \begin{pgfscope}
    \pgfpathmoveto{\pgfpointxy{846.987}{381.936}}
    \pgfpatharcaxes{113.962}{293.962}{\pgfpointxy{5.7681}{0.0}}{\pgfpointxy{0.0}{5.7681}}
    \pgfusepath{stroke}
  \end{pgfscope}
  \begin{pgfscope}
    \pgfpathmoveto{\pgfpointxy{936.478}{428.307}}
    \pgfpatharcaxes{263.962}{293.962}{\pgfpointxy{45.0}{0.0}}{\pgfpointxy{0.0}{45.0}}
    \pgfusepath{stroke}
  \end{pgfscope}
  \begin{pgfscope}
    \pgfpathmoveto{\pgfpointxy{959.487}{431.936}}
    \pgfpatharcaxes{113.962}{143.962}{\pgfpointxy{45.0}{0.0}}{\pgfpointxy{0.0}{45.0}}
    \pgfusepath{stroke}
  \end{pgfscope}
  \pgftext[at={\pgfpointxy{1050.0}{450.0}}]{$a$}
  \pgftext[at={\pgfpointxy{1125.0}{450.0}}]{$b$}
  \pgftext[at={\pgfpointxy{1200.0}{450.0}}]{$c$}
  \pgftext[at={\pgfpointxy{1050.0}{550.0}}]{$x$}
  \pgftext[at={\pgfpointxy{1125.0}{550.0}}]{$y$}
  \pgftext[at={\pgfpointxy{1200.0}{550.0}}]{$z$}
  \pgftext[at={\pgfpointxy{650.0}{350.0}}]{$a$}
  \pgftext[at={\pgfpointxy{725.0}{350.0}}]{$b$}
  \pgftext[at={\pgfpointxy{800.0}{350.0}}]{$c$}
  \pgftext[at={\pgfpointxy{725.0}{450.0}}]{$y$}
  \pgftext[at={\pgfpointxy{650.0}{550.0}}]{$a$}
  \pgftext[at={\pgfpointxy{725.0}{550.0}}]{$b$}
  \pgftext[at={\pgfpointxy{800.0}{550.0}}]{$c$}
  \pgftext[at={\pgfpointxy{650.0}{650.0}}]{$x$}
  \pgftext[at={\pgfpointxy{800.0}{650.0}}]{$z$}
  \pgftext[at={\pgfpointxy{325.0}{500.0}}]{$a$}
  \pgftext[at={\pgfpointxy{400.0}{500.0}}]{$b$}
  \pgftext[at={\pgfpointxy{475.0}{500.0}}]{$c$}
  \pgftext[bottom,at={\pgfpointxy{1125.0}{637.0}}]{$\calD$}
  \pgftext[top,at={\pgfpointxy{725.0}{313.0}}]{$\calB$}
  \pgftext[bottom,at={\pgfpointxy{725.0}{662.0}}]{$\calC$}
  \pgftext[bottom,at={\pgfpointxy{400.0}{537.0}}]{$\calA$}
\end{pgfpicture}
  \caption{A counterexample for the amalgamation property for the class of all at most 2-dimensional finite linear orders}
  \label{mnmrf-c.fig.no-amalg}
\end{figure}

The class of all at most 2-dimensional finite linear orders is not a \Fraisse\ age. Namely,
it is a well-known fact that one of the fundamental properties of a \Fraisse\ age is the
\emph{amalgamation property (AP)}, which is a requirement that
every span $\calB \overset f \hookleftarrow \calA \overset g \hookrightarrow \calC$ with $\calA, \calB, \calC \in \KK$
completes to a square with $\calD \in \KK$:
\begin{center}
  \begin{tikzcd}
    \calC \arrow[r, hookrightarrow, "g'"] & \calD\\
    \calA \arrow[r, hookrightarrow, "f"'] \arrow[u, hookrightarrow, "g"] & \calB \arrow[u, hookrightarrow, "f'"']
  \end{tikzcd}
\end{center}
This property fails for the class of all at most 2-dimensional finite linear orders: the span
$\calB \hookleftarrow \calA \hookrightarrow \calC$ in Fig.~\ref{mnmrf-c.fig.no-amalg}
can easily be completed to a square in the class of \emph{all} partially ordered sets;
however, every candidate for $\calD$ embeds a crown on six elements, so the dimension of each such $\calD$ is at least~3.

Nevertheless, we shall prove that the reduct of the generic permutation we are interested in is the \emph{weak \Fraisse\ limit} of this class, and hence a generic structure.

A class $\KK$ of finite relational structures over the same relational language has the \emph{joint embedding property (JEP)}
if every pair of structures from $\KK$ embeds into a structure from $\KK$; and it has the \emph{weak
amalgamation property (WAP)} if for every $\calA \in \KK$ there is a $\overline\calA \in \KK$ and
an embedding $e : \calA \hookrightarrow \overline\calA$ such that for every choice of $\calB, \calC \in \KK$
and embeddings $f : \overline\calA \hookrightarrow \calB$ and $g : \overline\calA \hookrightarrow \calC$
there exist a $\calD \in \KK$ and embeddings $f' : \calB \hookrightarrow \calD$ and
$g' : \calC \hookrightarrow \calD$ satisfying $f' \circ f \circ e = g' \circ g \circ e$:
\begin{center}
  \begin{tikzcd}
                          & \calC \arrow[r, hookrightarrow, "g'"] & \calD\\
    \calA \arrow[r, hookrightarrow, "e"'] & \overline \calA \arrow[r, hookrightarrow, "f"'] \arrow[u, hookrightarrow, "g"] & \calB \arrow[u, hookrightarrow, "f'"']
  \end{tikzcd}
\end{center}

The importance of the weak amalgamation property was first recognized by Ivanov \cite{ivanov-1999}
and later independently by Kechris and Rosendal \cite{kechris-rosendal-2007} in their study of generic automorphisms.
The importance of the weak amalgamation property with respect to the existence of generic structures was
recognized independently by Kubi\'s \cite{kubis-2022}, and Panagiotopoulos and Tent \cite{panag-tent-2022}.
Namely, if $\KK$ is an age of a countable relational structure with (JEP) and (WAP), then there is a
unique (up to isomorphism) countable structure $\Omega$ on $\omega = \{0, 1, 2, \ldots\}$ with $\Age(\Omega) = \KK$
whose isomorphism class is comeager in a naturally defined Baire space of all structures on $\omega$ whose age is $\KK$,
(see~\cite{panag-tent-2022} for details). We call $\Omega$ the \emph{weak \Fraisse\ limit of $\KK$}
and say that $\Omega$ is \emph{generic} among countable structures whose age is $\KK$.

In order to verify the weak amalgamation property,
it suffices to show that for every structure $\calA$ there is a well-behaved structure $\overline\calA$ into which
$\calA$ embeds, and over which we can weakly amalgamate. As we shall se below, in the case of finite posets of dimension $\le 2$
the well-behaved structures $\overline\calA$ are of the form $(A, \Boxed{\le^A}) \times (B, \Boxed{\le^B})$
where $(A, \Boxed{\le^A})$ and $(B, \Boxed{\le^B})$ are finite linear orders. 
We shall now introduce a bit of notation and establish several elementary facts about posets of this form,
which will be used in the main argument below.

Let $(A, \Boxed{\le^A})$ and $(B, \Boxed{\le^B})$ be finite linear orders. Then $(A, \Boxed{\le^A}) \times (B, \Boxed{\le^B})$
is a finite partially ordered set $(A \times B, \Boxed{\sqsubseteq^{A \times B}})$ where
$$
  (a_1, b_1) \mathrel{\sqsubseteq^{A \times B}} (a_2, b_2) \text{ iff } a_1 \le^A a_2 \land b_1 \le^B b_2.
$$
Let $\lex^{A \times B}$ and $\alex^{A \times B}$ denote the \emph{lexicographic} and \emph{antilexicographic} order
on $A \times B$, respectively, which are linear orders defined as follows:
\begin{align*}
  (a_1, b_1) \mathrel{\lex^{A \times B}} (a_2, b_2) \text{ iff }
  & a_1 <^A a_2 \lor (a_1 = a_2 \land b_1 \le^B b_2), \text{ and}\\
  (a_1, b_1) \mathrel{\alex^{A \times B}} (a_2, b_2) \text{ iff }
  & b_1 <^B b_2 \lor (b_1 = b_2 \land a_1 \le^A a_2).
\end{align*}
Then it is easy to show that:
$$
  \Boxed{\sqsubseteq^{A \times B}} = (\Boxed{\lex^{A \times B}}) \cap (\Boxed{\alex^{A \times B}}).
$$

\begin{LEMA}
  Let $(A, \Boxed{\le^A})$ and $(B, \Boxed{\le^B})$ be nonempty finite linear orders, and let
  $(A \times B, \Boxed{\sqsubseteq^{A \times B}}) = (A, \Boxed{\le^A}) \times (B, \Boxed{\le^B})$.
  Let $\le_1^{A \times B}$ and $\le_2^{A \times B}$ be linear orders on $A \times B$ such that
  $\Boxed{\sqsubseteq^{A \times B}} = (\Boxed{\le_1^{A \times B}}) \cap (\Boxed{\le_2^{A \times B}})$. Then
  $\{\Boxed{\le_1^{A \times B}}, \Boxed{\le_2^{A \times B}}\} = \{\Boxed{\lex^{A \times B}}, \Boxed{\alex^{A \times B}}\}$.
\end{LEMA}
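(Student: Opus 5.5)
The plan is to show that the two linear orders realizing $\sqsubseteq^{A \times B}$ are completely forced once we know how one of them orders a single incomparable pair. This works because every incomparable pair has to be ordered one way by $\le_1^{A \times B}$ and the opposite way by $\le_2^{A \times B}$, and the orientations of different incomparable pairs are tied together.

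First I dispose of the degenerate case. If $|A| = 1$ or $|B| = 1$, then $\sqsubseteq^{A \times B}$ is a linear order. Both $\le_1^{A \times B}$ and $\le_2^{A \times B}$ extend it and so must coincide with it, and so do $\lex^{A \times B}$ and $\alex^{A \times B}$. So assume $|A|, |B| \ge 2$. Then incomparable pairs exist. Every incomparable pair can be written uniquely as $\{x, y\}$ with $x = (a_1, b_1)$, $y = (a_2, b_2)$, $a_1 <^A a_2$ and $b_1 >^B b_2$. Call the pair \emph{of type L} in $\le_1^{A \times B}$ if $x <_1 y$, and \emph{of type R} otherwise. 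Since $\sqsubseteq^{A \times B} = \le_1^{A \times B} \cap \le_2^{A \times B}$ and $x, y$ are incomparable, the type in $\le_2^{A \times B}$ is always the opposite one.

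The key step, which I expect to be the main obstacle, is to prove that all incomparable pairs have the same type in $\le_1^{A \times B}$. I would use a local rule. Suppose $\{x, y\}$ and $\{x, y'\}$ are both incomparable and $y \sqsubseteq y'$. If they had different types in $\le_1^{A \times B}$, say $x <_1 y'$ and $y <_1 x$, the opposite types in $\le_2^{A \times B}$ give $y' <_2 x <_2 y$. This contradicts $y \le_2 y'$, which holds because $y \sqsubseteq y'$. The other mixed case is symmetric, and the same argument applies when the first point moves instead of the second. So changing one endpoint of an incomparable pair to a comparable point, while keeping the pair incomparable, preserves the type. Now start from any incomparable pair $((a_1,b_1),(a_2,b_2))$ and move to the canonical pair $((a_{\min}, b_{\max}), (a_{\max}, b_{\min}))$ one coordinate at a time. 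Lower $a_1$ to $a_{\min}$, raise $b_1$ to $b_{\max}$, raise $a_2$ to $a_{\max}$, and lower $b_2$ to $b_{\min}$. Each step replaces one point by a point comparable to it, and it preserves $a_1 <^A a_2$ and $b_1 >^B b_2$, so every intermediate pair is incomparable. Hence every pair has the type of the canonical pair.

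Finally I identify the orders. Suppose all incomparable pairs are of type L in $\le_1^{A \times B}$. Then $\le_1^{A \times B}$ agrees with $\sqsubseteq^{A \times B}$ on comparable pairs and orders each incomparable pair by the first coordinate. This is exactly $\lex^{A \times B}$: for comparable pairs $\lex^{A \times B}$ extends $\sqsubseteq^{A \times B}$, and for $a_1 < a_2$ it puts $(a_1, b_1)$ first. Moreover $\le_2^{A \times B}$ then has type R everywhere, which by the same reasoning is exactly $\alex^{A \times B}$. The case of type R in $\le_1^{A \times B}$ is symmetric and gives the swapped assignment. This yields $\{\le_1^{A \times B}, \le_2^{A \times B}\} = \{\lex^{A \times B}, \alex^{A \times B}\}$.
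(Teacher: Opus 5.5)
Your proof is correct, and it takes a genuinely different route from the paper's.

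\textbf{The paper's route.} It argues by induction on $|B|$. It restricts both orders to $A \times B_0$, where $B_0$ omits the top element of $B$, and applies the induction hypothesis there. It then inserts the new top row into $\le_1^{A\times B}$ and $\le_2^{A\times B}$ one element at a time, pinning down each position through incomparabilities with specific elements of the lower rows.

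\textbf{Your route.} You dispense with induction altogether. Your local rule says that a point incomparable to both ends of a chain $y \sqsubseteq y'$ lies on the same side of $y$ and of $y'$ in each realizing order. This is exactly Gallai's forcing relation on the incomparability graph. Your four-step path to the extremal pair then shows that all incomparable pairs of the grid lie in a single forcing class, so the realizer is determined up to swapping the two orders. You tacitly use that the fixed point keeps its role (left or right) when its partner moves along a chain. This follows from incomparability, and along your path it is explicit, since you keep the first coordinates strictly increasing and the second strictly decreasing.

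\textbf{Comparison.} Your version is shorter, uniform in $|A|$ and $|B|$, and explains conceptually why the realizer is unique. It also avoids a delicate spot in the inductive argument. At the step $n = 2$ one has $\lex^{A \times B_0} = \alex^{A \times B_0}$, so the induction hypothesis breaks no symmetry. The inequality $b_1 \le_1^{A\times B_0} a_{n-1}$ used there becomes $(b,1) \le_1 (a,1)$, which is false since $(a,1) \sqsubseteq (b,1)$. That case therefore needs its own symmetry-breaking choice. In return, the paper's argument is constructive: it describes explicitly where each new element is inserted.
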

\begin{proof}
  The proof is by induction on $|B|$. If $|B| = 1$ then $(A \times B, \Boxed{\sqsubseteq^{A \times B}}) \cong (A, \Boxed{\le^A})$
  and:
  $$
    (\le_1^{A \times B}) = (\le_2^{A \times B}) = (\lex^{A \times B}) = (\alex^{A \times B}).
  $$
  For the induction step, assume that the statement is true whenever $|B| = n - 1$ and let $(B, \Boxed{\le^B})$ be an $n$-element
  linearly ordered set, say $B = \{1, 2, \ldots, n\}$ with the usual ordering of the integers, where $n \ge 2$.
  For the sake of simplicity, let $A = \{a, b, c, \ldots, z\}$ so that
  $$
    A \times B = \{a_1, b_1, \ldots, z_1, \, a_2, b_2, \ldots, z_2, \, \ldots, \, a_n, b_n, \ldots, z_n\},
  $$
  see Fig.~\ref{corrigendum.fig.AxB}.

  \begin{figure}
    \centering
\begin{pgfpicture}
  \pgfsetxvec{\pgfpoint{\acadpgfunit}{0pt}}
  \pgfsetyvec{\pgfpoint{0pt}{\acadpgfunit}}
  \pgfsetlinewidth{\acadpgflinewidth}
  \pgftransformshift{\pgfpointxy{50.0}{-112.5}}

  \begin{pgfscope}
    \pgfpathmoveto{\pgfpointxy{126.517}{676.517}}
    \pgfpathlineto{\pgfpointxy{173.483}{723.483}}
    \pgfusepath{stroke}
  \end{pgfscope}
  \begin{pgfscope}
    \pgfpathmoveto{\pgfpointxy{226.517}{776.517}}
    \pgfpathlineto{\pgfpointxy{273.483}{823.483}}
    \pgfusepath{stroke}
  \end{pgfscope}
  \begin{pgfscope}
    \pgfpathmoveto{\pgfpointxy{326.517}{876.517}}
    \pgfpathlineto{\pgfpointxy{373.483}{923.483}}
    \pgfusepath{stroke}
  \end{pgfscope}
  \begin{pgfscope}
    \pgfpathmoveto{\pgfpointxy{426.517}{976.517}}
    \pgfpathlineto{\pgfpointxy{473.483}{1023.48}}
    \pgfusepath{stroke}
  \end{pgfscope}
  \begin{pgfscope}
    \pgfpathmoveto{\pgfpointxy{526.517}{1023.48}}
    \pgfpathlineto{\pgfpointxy{573.483}{976.517}}
    \pgfusepath{stroke}
  \end{pgfscope}
  \begin{pgfscope}
    \pgfpathmoveto{\pgfpointxy{226.517}{523.483}}
    \pgfpathlineto{\pgfpointxy{273.483}{476.517}}
    \pgfusepath{stroke}
  \end{pgfscope}
  \begin{pgfscope}
    \pgfpathmoveto{\pgfpointxy{326.517}{476.517}}
    \pgfpathlineto{\pgfpointxy{373.483}{523.483}}
    \pgfusepath{stroke}
  \end{pgfscope}
  \begin{pgfscope}
    \pgfpathmoveto{\pgfpointxy{426.517}{576.517}}
    \pgfpathlineto{\pgfpointxy{473.483}{623.483}}
    \pgfusepath{stroke}
  \end{pgfscope}
  \begin{pgfscope}
    \pgfpathmoveto{\pgfpointxy{526.517}{676.517}}
    \pgfpathlineto{\pgfpointxy{573.483}{723.483}}
    \pgfusepath{stroke}
  \end{pgfscope}
  \begin{pgfscope}
    \pgfpathmoveto{\pgfpointxy{626.517}{776.517}}
    \pgfpathlineto{\pgfpointxy{673.483}{823.483}}
    \pgfusepath{stroke}
  \end{pgfscope}
  \begin{pgfscope}
    \pgfpathmoveto{\pgfpointxy{726.517}{823.483}}
    \pgfpathlineto{\pgfpointxy{773.483}{776.517}}
    \pgfusepath{stroke}
  \end{pgfscope}
  \begin{pgfscope}
    \pgfpathmoveto{\pgfpointxy{773.483}{723.483}}
    \pgfpathlineto{\pgfpointxy{726.517}{676.517}}
    \pgfusepath{stroke}
  \end{pgfscope}
  \begin{pgfscope}
    \pgfpathmoveto{\pgfpointxy{673.483}{623.483}}
    \pgfpathlineto{\pgfpointxy{626.517}{576.517}}
    \pgfusepath{stroke}
  \end{pgfscope}
  \begin{pgfscope}
    \pgfpathmoveto{\pgfpointxy{573.483}{523.483}}
    \pgfpathlineto{\pgfpointxy{526.517}{476.517}}
    \pgfusepath{stroke}
  \end{pgfscope}
  \begin{pgfscope}
    \pgfpathmoveto{\pgfpointxy{473.483}{423.483}}
    \pgfpathlineto{\pgfpointxy{426.517}{376.517}}
    \pgfusepath{stroke}
  \end{pgfscope}
  \begin{pgfscope}
    \pgfpathmoveto{\pgfpointxy{426.517}{323.483}}
    \pgfpathlineto{\pgfpointxy{473.483}{276.517}}
    \pgfusepath{stroke}
  \end{pgfscope}
  \begin{pgfscope}
    \pgfpathmoveto{\pgfpointxy{526.517}{276.517}}
    \pgfpathlineto{\pgfpointxy{573.483}{323.483}}
    \pgfusepath{stroke}
  \end{pgfscope}
  \begin{pgfscope}
    \pgfpathmoveto{\pgfpointxy{626.517}{376.517}}
    \pgfpathlineto{\pgfpointxy{673.483}{423.483}}
    \pgfusepath{stroke}
  \end{pgfscope}
  \begin{pgfscope}
    \pgfpathmoveto{\pgfpointxy{726.517}{476.517}}
    \pgfpathlineto{\pgfpointxy{773.483}{523.483}}
    \pgfusepath{stroke}
  \end{pgfscope}
  \begin{pgfscope}
    \pgfpathmoveto{\pgfpointxy{826.517}{576.517}}
    \pgfpathlineto{\pgfpointxy{873.483}{623.483}}
    \pgfusepath{stroke}
  \end{pgfscope}
  \begin{pgfscope}
    \pgfpathmoveto{\pgfpointxy{873.483}{676.517}}
    \pgfpathlineto{\pgfpointxy{826.517}{723.483}}
    \pgfusepath{stroke}
  \end{pgfscope}
  \begin{pgfscope}
    \pgfpathmoveto{\pgfpointxy{673.483}{876.517}}
    \pgfpathlineto{\pgfpointxy{626.517}{923.483}}
    \pgfusepath{stroke}
  \end{pgfscope}
  \begin{pgfscope}
    \pgfpathmoveto{\pgfpointxy{373.483}{376.517}}
    \pgfpathlineto{\pgfpointxy{326.517}{423.483}}
    \pgfusepath{stroke}
  \end{pgfscope}
  \begin{pgfscope}
    \pgfpathmoveto{\pgfpointxy{173.483}{576.517}}
    \pgfpathlineto{\pgfpointxy{126.517}{623.483}}
    \pgfusepath{stroke}
  \end{pgfscope}
  \begin{pgfscope}
    \pgfpathmoveto{\pgfpointxy{226.517}{723.483}}
    \pgfpathlineto{\pgfpointxy{273.483}{676.517}}
    \pgfusepath{stroke}
  \end{pgfscope}
  \begin{pgfscope}
    \pgfpathmoveto{\pgfpointxy{326.517}{623.483}}
    \pgfpathlineto{\pgfpointxy{373.483}{576.517}}
    \pgfusepath{stroke}
  \end{pgfscope}
  \begin{pgfscope}
    \pgfpathmoveto{\pgfpointxy{426.517}{523.483}}
    \pgfpathlineto{\pgfpointxy{473.483}{476.517}}
    \pgfusepath{stroke}
  \end{pgfscope}
  \begin{pgfscope}
    \pgfpathmoveto{\pgfpointxy{526.517}{423.483}}
    \pgfpathlineto{\pgfpointxy{573.483}{376.517}}
    \pgfusepath{stroke}
  \end{pgfscope}
  \begin{pgfscope}
    \pgfpathmoveto{\pgfpointxy{326.517}{823.483}}
    \pgfpathlineto{\pgfpointxy{373.483}{776.517}}
    \pgfusepath{stroke}
  \end{pgfscope}
  \begin{pgfscope}
    \pgfpathmoveto{\pgfpointxy{426.517}{723.483}}
    \pgfpathlineto{\pgfpointxy{473.483}{676.517}}
    \pgfusepath{stroke}
  \end{pgfscope}
  \begin{pgfscope}
    \pgfpathmoveto{\pgfpointxy{526.517}{623.483}}
    \pgfpathlineto{\pgfpointxy{573.483}{576.517}}
    \pgfusepath{stroke}
  \end{pgfscope}
  \begin{pgfscope}
    \pgfpathmoveto{\pgfpointxy{626.517}{523.483}}
    \pgfpathlineto{\pgfpointxy{673.483}{476.517}}
    \pgfusepath{stroke}
  \end{pgfscope}
  \begin{pgfscope}
    \pgfpathmoveto{\pgfpointxy{773.483}{576.517}}
    \pgfpathlineto{\pgfpointxy{726.517}{623.483}}
    \pgfusepath{stroke}
  \end{pgfscope}
  \begin{pgfscope}
    \pgfpathmoveto{\pgfpointxy{673.483}{676.517}}
    \pgfpathlineto{\pgfpointxy{626.517}{723.483}}
    \pgfusepath{stroke}
  \end{pgfscope}
  \begin{pgfscope}
    \pgfpathmoveto{\pgfpointxy{573.483}{776.517}}
    \pgfpathlineto{\pgfpointxy{526.517}{823.483}}
    \pgfusepath{stroke}
  \end{pgfscope}
  \begin{pgfscope}
    \pgfpathmoveto{\pgfpointxy{473.483}{876.517}}
    \pgfpathlineto{\pgfpointxy{426.517}{923.483}}
    \pgfusepath{stroke}
  \end{pgfscope}
  \begin{pgfscope}
    \pgfsetdash{{1.5pt}{2pt}}{0pt}
    \pgfpathmoveto{\pgfpointxy{900.0}{550.0}}
    \pgfpathlineto{\pgfpointxy{400.0}{1050.0}}
    \pgfusepath{stroke}
  \end{pgfscope}
  \begin{pgfscope}
    \pgfsetdash{{1.5pt}{2pt}}{0pt}
    \pgfpathmoveto{\pgfpointxy{400.0}{1050.0}}
    \pgfpathlineto{\pgfpointxy{0.0}{650.0}}
    \pgfusepath{stroke}
  \end{pgfscope}
  \begin{pgfscope}
    \pgfsetdash{{1.5pt}{2pt}}{0pt}
    \pgfpathmoveto{\pgfpointxy{0.0}{650.0}}
    \pgfpathlineto{\pgfpointxy{500.0}{150.0}}
    \pgfusepath{stroke}
  \end{pgfscope}
  \begin{pgfscope}
    \pgfsetdash{{1.5pt}{2pt}}{0pt}
    \pgfpathmoveto{\pgfpointxy{500.0}{150.0}}
    \pgfpathlineto{\pgfpointxy{900.0}{550.0}}
    \pgfusepath{stroke}
  \end{pgfscope}
  \pgftext[at={\pgfpointxy{500.0}{250.0}}]{$a_1$}
  \pgftext[at={\pgfpointxy{400.0}{350.0}}]{$b_1$}
  \pgftext[at={\pgfpointxy{300.0}{450.0}}]{$c_1$}
  \pgftext[at={\pgfpointxy{100.0}{650.0}}]{$z_1$}
  \pgftext[at={\pgfpointxy{201.525}{559.913}}]{$\ddots$}
  \pgftext[at={\pgfpointxy{600.0}{350.0}}]{$a_2$}
  \pgftext[at={\pgfpointxy{500.0}{450.0}}]{$b_2$}
  \pgftext[at={\pgfpointxy{400.0}{550.0}}]{$c_2$}
  \pgftext[at={\pgfpointxy{200.0}{750.0}}]{$z_2$}
  \pgftext[at={\pgfpointxy{301.525}{659.913}}]{$\ddots$}
  \pgftext[at={\pgfpointxy{700.763}{459.151}}]{$\iddots$}
  \pgftext[at={\pgfpointxy{600.763}{559.151}}]{$\iddots$}
  \pgftext[at={\pgfpointxy{500.763}{659.151}}]{$\iddots$}
  \pgftext[at={\pgfpointxy{300.763}{859.151}}]{$\iddots$}
  \pgftext[at={\pgfpointxy{800.0}{550.0}}]{$a_{n-1}$}
  \pgftext[at={\pgfpointxy{700.0}{650.0}}]{$b_{n-1}$}
  \pgftext[at={\pgfpointxy{600.0}{750.0}}]{$c_{n-1}$}
  \pgftext[at={\pgfpointxy{400.0}{950.0}}]{$z_{n-1}$}
  \pgftext[at={\pgfpointxy{501.525}{859.913}}]{$\ddots$}
  \pgftext[at={\pgfpointxy{900.0}{650.0}}]{$a_n$}
  \pgftext[at={\pgfpointxy{800.0}{750.0}}]{$b_n$}
  \pgftext[at={\pgfpointxy{700.0}{850.0}}]{$c_n$}
  \pgftext[at={\pgfpointxy{500.0}{1050.0}}]{$z_n$}
  \pgftext[at={\pgfpointxy{601.525}{959.913}}]{$\ddots$}
  \pgftext[at={\pgfpointxy{401.525}{759.913}}]{$\ddots$}
  \pgftext[top,left,at={\pgfpointxy{708.0}{342.0}}]{$A \times B_0$}
\end{pgfpicture}
	\caption{$(A \times B, \Boxed{\sqsubseteq^{A \times B}})$ in the proof of Lemma A}
	\label{corrigendum.fig.AxB}
  \end{figure}

  Let $\le_1^{A \times B}$ and $\le_2^{A \times B}$ be linear orders on $A \times B$ such that
  $$
    \Boxed{\sqsubseteq^{A \times B}} = (\Boxed{\le_1^{A \times B}}) \cap (\Boxed{\le_2^{A \times B}}).
  $$
  Put $B_0 = \{1, 2, \ldots, n-1\}$ and let $\le_1^{A \times B_0}$ and $\le_2^{A \times B_0}$ be the restrictions of
  $\le_1^{A \times B}$ and $\le_2^{A \times B}$, respectively, to $A \times B_0$. Then 
  $$
    \Boxed{\sqsubseteq^{A \times B_0}} = (\Boxed{\le_1^{A \times B_0}}) \cap (\Boxed{\le_2^{A \times B_0}}),
  $$
  so, by the induction hypothesis, 
  $$
    \{\Boxed{\le_1^{A \times B_0}}, \Boxed{\le_2^{A \times B_0}}\} = \{\Boxed{\lex^{A \times B_0}}, \Boxed{\alex^{A \times B_0}}\}.
  $$
  Without loss of generality we can assume that:
  \begin{align*}
    \Boxed{\le_1^{A \times B_0}} = \Boxed{\alex^{A \times B_0}} = a_1 b_1 \ldots z_1 \; a_2 b_2 \ldots z_2 \; \ldots \; a_{n-1} b_{n-1} \ldots z_{n-1};\\
    \Boxed{\le_2^{A \times B_0}} = \Boxed{\lex^{A \times B_0}} = a_1 a_2 \ldots a_{n-1} \; b_1 b_2 \ldots b_{n-1} \; \ldots \; z_1 z_2 \ldots z_{n-1}.
  \end{align*}
  Let us show that there is exactly one way to reconstruct $\Boxed{\le_1^{A \times B}}$ and $\Boxed{\le_2^{A \times B}}$
  from $\Boxed{\le_1^{A \times B_0}}$ and $\Boxed{\le_2^{A \times B_0}}$.
  
  Let us start with $a_n$. Since $a_{n-1} \mathrel{\sqsubseteq^{A \times B}} a_n$ it follows that
  $a_{n-1} \mathrel{\le_1^{A \times B}} a_n$ and $a_{n-1} \mathrel{\le_2^{A \times B}} a_n$.
  By the induction hypothesis we have that $b_1 \mathrel{\le_1^{A \times B_0}} a_{n-1}$, so
  $b_1 \mathrel{\le_1^{A \times B}} a_{n}$. Then $a_n \mathrel{\le_2^{A \times B}} b_1$ because
  $a_n$ and $b_1$ are incomparable with respect to $\Boxed{\sqsubseteq^{A \times B}} = (\Boxed{\le_1^{A \times B}}) \cap (\Boxed{\le_2^{A \times B}})$.
  Therefore,
  $$
    a_{n-1} \mathrel{\le_2^{A \times B}} a_n \mathrel{\le_2^{A \times B}} b_1,
  $$
  whence follows that $\Boxed{\le_2^{A \times B}}$ contains the following linear order:
  $$
    \Boxed{\le_2^{A \times B}}: \; a_1 a_2 \ldots a_{n-1} \underset\uparrow{a_n} \; b_1 b_2 \ldots b_{n-1} \; \ldots \; z_1 z_2 \ldots z_{n-1}
  $$
  Consequently, $a_n \mathrel{\le_2^{A \times B}} z_{n-1}$, so $z_{n-1} \mathrel{\le_1^{A \times B}} a_n$ because
  $a_n$ and $z_{n-1}$ are incomparable with respect to $\Boxed{\sqsubseteq^{A \times B}}$. This implies that
  $\Boxed{\le_1^{A \times B}}$ contains the following linear order:
  $$
    \Boxed{\le_1^{A \times B}}: \; a_1 b_1 \ldots z_1 \; a_2 b_2 \ldots z_2 \; \ldots \; a_{n-1} b_{n-1} \ldots z_{n-1} \; a_n.
  $$
  But, $a_n \Boxed{\sqsubseteq^{A \times B}} b_n \Boxed{\sqsubseteq^{A \times B}} \ldots \Boxed{\sqsubseteq^{A \times B}} z_n$, so
  we can fully describe $\Boxed{\le_1^{A \times B}}$ as:
  $$
    \Boxed{\le_1^{A \times B}} = a_1 b_1 \ldots z_1 \; a_2 b_2 \ldots z_2 \; \ldots \; a_{n-1} b_{n-1} \ldots z_{n-1} \; a_n b_n \ldots z_n = \Boxed{\alex^{A \times B}}.
  $$

  Let us complete the proof that $\Boxed{\le_2^{A \times B}} = \Boxed{\lex^{A \times B}}$. We have already placed $a_n$, so let
  us now move on to $b_n$. Since $b_{n-1} \mathrel{\sqsubseteq^{A \times B}} b_n$ it follows that
  $b_{n-1} \mathrel{\le_1^{A \times B}} b_n$ and $b_{n-1} \mathrel{\le_2^{A \times B}} b_n$.
  By the induction hypothesis we have that $c_1 \mathrel{\le_1^{A \times B_0}} b_{n-1}$, so
  $c_1 \mathrel{\le_1^{A \times B}} b_{n}$. Then $b_n \mathrel{\le_2^{A \times B}} c_1$ because
  $b_n$ and $c_1$ are incomparable with respect to $\Boxed{\sqsubseteq^{A \times B}}$. Therefore,
  $$
    b_{n-1} \mathrel{\le_2^{A \times B}} b_n \mathrel{\le_2^{A \times B}} c_1,
  $$
  whence follows that $\Boxed{\le_2^{A \times B}}$ contains the following linear order:
  $$
    \Boxed{\le_2^{A \times B}}: \; a_1 a_2 \ldots a_{n-1} a_n \; b_1 b_2 \ldots b_{n-1} \underset\uparrow{b_n} \; c_1 \ldots \; z_1 z_2 \ldots z_{n-1}.
  $$
  We can now repeat the argument to place $c_n$ immediately after $c_{n-1}$, \ldots, $y_n$ immediately after $y_{n-1}$
  showing, thus, that $\Boxed{\le_2^{A \times B}}$ contains the following linear order:
  $$
    \Boxed{\le_2^{A \times B}}: \; a_1 a_2 \ldots a_n \; b_1 b_2 \ldots b_n \; c_1 c_2 \ldots c_n \; \ldots \; 
	y_1 y_2 \ldots y_n \; z_1 z_2 \ldots z_{n-1}.
  $$
  The fact that $z_n$ is the largest element in $(A \times B, \Boxed{\sqsubseteq^{A \times B}})$ places $z_n$ at the very end of the above list,
  showing that $\Boxed{\le_2^{A \times B}} = \Boxed{\lex^{A \times B}}$.
\end{proof}

\begin{LEMB}
  Let $(A, \Boxed{\le^A})$, $(B, \Boxed{\le^B})$, $(C, \Boxed{\le^C})$ and $(D, \Boxed{\le^D})$
  be nonempty finite linear orders, and let $f : (A, \Boxed{\le^A}) \times (B, \Boxed{\le^B}) \hookrightarrow
  (C, \Boxed{\le^C}) \times (D, \Boxed{\le^D})$ be an embedding. Then
  \begin{itemize}
  \item $f : (A \times B, \Boxed{\lex^{A \times B}}, \Boxed{\alex^{A \times B}}) \hookrightarrow (C \times D, \Boxed{\lex^{C \times D}}, \Boxed{\alex^{C \times D}})$ is an embedding, or
  \item $f : (A \times B, \Boxed{\lex^{A \times B}}, \Boxed{\alex^{A \times B}}) \hookrightarrow (C \times D, \Boxed{\alex^{C \times D}}, \Boxed{\lex^{C \times D}})$ is an embedding.
  \end{itemize}
\end{LEMB}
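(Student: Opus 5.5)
The plan is to reduce Lemma~B to Lemma~A by pulling back the two canonical linear orders of the target along $f$. Put $P = A \times B$, $Q = C \times D$, and define two linear orders on $P$:
$$
  p \mathrel{\le_1} p' \iff f(p) \mathrel{\lex^{C \times D}} f(p'), \qquad p \mathrel{\le_2} p' \iff f(p) \mathrel{\alex^{C \times D}} f(p').
$$
Since $f$ is injective, both are linear orders on $P$.

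Next we check that $\Boxed{\sqsubseteq^{A \times B}} = (\Boxed{\le_1}) \cap (\Boxed{\le_2})$. By hypothesis $f$ is an embedding of posets, so $p \mathrel{\sqsubseteq^{A\times B}} p'$ holds iff $f(p) \mathrel{\sqsubseteq^{C \times D}} f(p')$. By the identity $\Boxed{\sqsubseteq^{C \times D}} = (\Boxed{\lex^{C \times D}}) \cap (\Boxed{\alex^{C \times D}})$ noted before Lemma~A, this holds iff $f(p) \mathrel{\lex^{C\times D}} f(p')$ and $f(p) \mathrel{\alex^{C\times D}} f(p')$. That is exactly $p \mathrel{\le_1} p'$ and $p \mathrel{\le_2} p'$.

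Since $A$ and $B$ are nonempty finite linear orders, Lemma~A applies and gives $\{\Boxed{\le_1}, \Boxed{\le_2}\} = \{\Boxed{\lex^{A\times B}}, \Boxed{\alex^{A\times B}}\}$. There are two cases.
\begin{itemize}
\item If $\Boxed{\le_1} = \Boxed{\lex^{A\times B}}$ and $\Boxed{\le_2} = \Boxed{\alex^{A\times B}}$, then by the definition of $\le_1, \le_2$ the map $f$ is an embedding of $(A \times B, \Boxed{\lex^{A \times B}}, \Boxed{\alex^{A \times B}})$ into $(C \times D, \Boxed{\lex^{C \times D}}, \Boxed{\alex^{C \times D}})$. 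This is the first alternative.
\item If the two orders are swapped, the same reasoning gives the second alternative.
\end{itemize}
An injective map that preserves a linear order in both directions is automatically an embedding of that linear order, so nothing further needs checking.

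There is no real obstacle here: Lemma~A carries all the content. The only points needing care are that Lemma~A requires $A, B$ nonempty, which the statement assumes, and that the pulled-back relations are genuine linear orders on $A \times B$, which follows from injectivity of $f$.
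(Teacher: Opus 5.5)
Your proof is correct and matches the paper's argument: you pull back $\lex^{C\times D}$ and $\alex^{C\times D}$ along $f$, check that their intersection is $\sqsubseteq^{A\times B}$, and invoke Lemma~A. Your explicit verification of the intersection identity, using that $f$ is a poset embedding and that $\sqsubseteq^{C\times D} = (\lex^{C\times D}) \cap (\alex^{C\times D})$, fills in the step the paper calls easy.
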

\begin{proof}
  Let $f : (A, \Boxed{\le^A}) \times (B, \Boxed{\le^B}) \hookrightarrow
  (C, \Boxed{\le^C}) \times (D, \Boxed{\le^D})$ be an embedding. In other words,
  $f : (A \times B, \Boxed{\sqsubseteq^{A \times B}}) \hookrightarrow (C \times D, \Boxed{\sqsubseteq^{C \times D}})$ is an embedding.
  Define linear orders $\le_1^{A \times B}$ and $\le_2^{A \times B}$ on $A \times B$ as follows:
  \begin{align*}
    (a_1, b_1) \mathrel{\le_1^{A \times B}} (a_2, b_2) &\text{ iff } f(a_1, b_1) \mathrel{\lex^{C \times D}} f(a_2, b_2), \text{ and}\\
    (a_1, b_1) \mathrel{\le_2^{A \times B}} (a_2, b_2) &\text{ iff } f(a_1, b_1) \mathrel{\alex^{C \times D}} f(a_2, b_2).
  \end{align*}
  Then $f : (A \times B, \Boxed{\le_1^{A \times B}}, \Boxed{\le_2^{A \times B}}) \hookrightarrow (C \times D, \Boxed{\lex^{C \times D}}, \Boxed{\alex^{C \times D}})$ is an embedding.
  On the other hand, it is easy to check that:
  $$
    \Boxed{\sqsubseteq^{A \times B}} = (\Boxed{\le_1^{A \times B}}) \cap (\Boxed{\le_2^{A \times B}}),
  $$
  so, by Lemma~A, it must be the case that
  $\{\Boxed{\le_1^{A \times B}}, \Boxed{\le_2^{A \times B}}\} = \{\Boxed{\lex^{A \times B}}, \Boxed{\alex^{A \times B}}\}$.
\end{proof}

Following \cite{kubis-2022}, in order to show that a countable structure $\Omega$ is a weak \Fraisse\ limit
of an age $\KK$, and hence a generic structure, it suffices to show that $\Omega$ embeds every structure from $\KK$,
and that it is \emph{weakly injective} in the following sense: for every $\calA \in \KK$ and every embedding
$f : \calA \hookrightarrow \Omega$ there is an embedding $e : \calA \hookrightarrow \overline\calA$ with $\overline \calA \in \KK$
such that for every embedding $g : \overline\calA \hookrightarrow \calB$ with $\calB \in \KK$ there is an
embedding $h : \calB \hookrightarrow \Omega$ such that $h \circ g \circ e = f$:
\begin{center}
  \begin{tikzcd}
    \calA \arrow[d, hookrightarrow, "f"'] \arrow[r, hookrightarrow, "e"] & \overline\calA \arrow[r, hookrightarrow, "g"] & \calB \arrow[dll, hookrightarrow, dashed, "h"]\\
	\Omega
  \end{tikzcd}
\end{center}

Instead of the strict version $\calQ = (\QQ, \Boxed{\prec_1}, \Boxed{\prec_2})$ of the generic permutation,
for the constructions that follow it will be more convenient to work with the reflexive version
$\calQ_\refl = (\QQ, \Boxed{\preceq_1}, \Boxed{\preceq_2})$. Define $\preceq$ on $\QQ$ as follows:
$$
  a \preceq b \text{ iff } a \preceq_1 b \land a \preceq_2 b.
$$
Note that $\preceq_1$ and $\preceq_2$ are linear orders, while $\preceq$ is a 2-dimensional partial order on $\QQ$.
So, $(\QQ, \Boxed\preceq)$ is a 2-dimensional poset and Theorem~6.4 shows that this poset has big Ramsey degrees.

\begin{THMC}
  $(\QQ, \Boxed\preceq)$ is the generic 2-dimensional poset.
\end{THMC}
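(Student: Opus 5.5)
We follow the criterion of Kubi\'s quoted above: it suffices to show that $(\QQ, \Boxed\preceq)$ embeds every finite poset of dimension $\le 2$, and that it is weakly injective with respect to the class $\KK$ of all finite posets of dimension $\le 2$.

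\emph{Universality} is immediate. If $(P, \Boxed{\le_P})$ has dimension $\le 2$, pick linear orders $\le_1, \le_2$ on $P$ whose intersection is $\le_P$. Then $(P, \le_1, \le_2)$ is a finite structure with two linear orders. By the universality of the generic permutation $\calQ_\refl$, it embeds into $(\QQ, \Boxed{\preceq_1}, \Boxed{\preceq_2})$, and that embedding is an embedding of $(P, \Boxed{\le_P})$ into $(\QQ, \Boxed\preceq)$.

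\emph{Weak injectivity.} Let $\calA = (A, \Boxed{\le_\calA}) \in \KK$ and let $f : \calA \hookrightarrow (\QQ, \Boxed\preceq)$ be an embedding. The image $f(A)$ is a finite subset of $\QQ$.
\begin{enumerate}[(1)]
\item List the distinct values of $f(A)$ in $\preceq_1$-order as $x_1 \prec_1 \dots \prec_1 x_m$, and let $\sigma$ be the permutation with $x_{\sigma(1)} \prec_2 \dots \prec_2 x_{\sigma(m)}$.
\item Set $\overline\calA = (L_1, \le) \times (L_2, \le)$, where $L_1 = L_2 = \{1, \dots, m\}$. Define $e(a) = (i, j)$, where $f(a) = x_i$ is the $i$-th element in $\preceq_1$ and the $j$-th element in $\preceq_2$.
\item Check that $e$ is a poset embedding. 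This holds because $\lex$ and $\alex$ restricted to the "diagonal" points $\{(i, \sigma^{-1}(i))\}$ realize exactly $\preceq_1$ and $\preceq_2$: on these points the first coordinates are distinct and the second coordinates are distinct. So $\overline\calA$ carries the expansion $(\overline A, \Boxed{\lex}, \Boxed{\alex})$, and $e$ is an embedding of the biorder $(A, \preceq_1, \preceq_2)$ into it.
\item Let $g : \overline\calA \hookrightarrow \calB$ with $\calB \in \KK$. Choose any two linear orders $\le_1^\calB, \le_2^\calB$ on $B$ whose intersection is $\le_\calB$. The composition of $g$ with the embedding into $(B, \le_1^\calB) \times (B, \le_2^\calB)$, given by the two linear positions, is a poset embedding of products of chains. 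By Lemma B (equivalently, by Lemma A applied to the pulled-back orders), the orders pulled back along $g$ are $\{\lex, \alex\}$ in some order.
\item If they come out as $\{\alex, \lex\}$, swap $\le_1^\calB$ and $\le_2^\calB$. We may therefore assume that $g : (\overline A, \lex, \alex) \hookrightarrow (B, \le_1^\calB, \le_2^\calB)$ is an embedding of biorders.
\item Then $g \circ e : (A, \preceq_1|, \preceq_2|) \hookrightarrow (B, \le_1^\calB, \le_2^\calB)$ is an embedding of structures with two linear orders, where $\preceq_i|$ denotes the order pulled back from $f$.
\item By the extension property (ultrahomogeneity) of the generic permutation $\calQ_\refl$, the partial embedding $f \circ (g \circ e)^{-1}$ from $g(e(A))$ into $\QQ$ extends to a biorder embedding $h : (B, \le_1^\calB, \le_2^\calB) \hookrightarrow \calQ_\refl$ with $h \circ g \circ e = f$.
\item Since $\le_\calB$ is the intersection of the two orders, $h$ is a poset embedding into $(\QQ, \Boxed\preceq)$.
\end{enumerate}

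The main obstacle is steps (4)--(5). The poset $\calA$ alone does not determine its two linear orders, so the realizers chosen on $\calB$ could disagree with the ones induced by $f$. This is exactly why we pass to the product $\overline\calA$: Lemma A makes the realizer of a product of two chains unique up to swapping, and the swap is harmless since we may relabel the realizer of $\calB$. A minor point to verify in step (3) is that $e$ really is compatible with $\le_\calA$, i.e. that $\sqsubseteq$ restricted to these points equals $\preceq$; this follows from $\sqsubseteq = \lex \cap \alex$.
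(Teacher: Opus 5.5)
Your proof is correct and is essentially the paper's argument. Your $\overline\calA = \{1,\dots,m\}^2$, with $e(a)$ given by the ranks of $f(a)$ under $\preceq_1$ and $\preceq_2$, is isomorphic to the paper's $(A,\le_1^A)\times(A,\le_2^A)$ with $e(a)=(a,a)$, and you use Lemma~A/B in the same way to force the realizer of $\calB$, pulled back along $g$, to be $\{\lex,\alex\}$ up to a swap before invoking the extension property of the generic permutation. The differences are cosmetic: you extend directly over $(B,\le_1^B,\le_2^B)$ rather than over the product $(B\times B,\alex,\lex)$ composed with $\ell$, and you leave implicit the trivial inclusion $\Age(\QQ,\preceq)\subseteq\KK_{\le 2}$ (restrict $\preceq_1,\preceq_2$), which the paper states in Claim~1.
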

\begin{proof}
  As we have argued above, it suffices to show that $(\QQ, \Boxed\preceq)$ is a weak \Fraisse\ limit of the class $\KK_{\le2}$
  of all finite at most 2-dimensional posets.
  
  \bigskip
  
  Claim 1. $\Age(\QQ, \Boxed\preceq) = \KK_{\le2}$.
  
  Proof. $(\subseteq)$ Let $(A, \Boxed{\preceq^A})$ be a finite poset and $f : (A, \Boxed{\preceq^A}) \hookrightarrow (\QQ, \Boxed\preceq)$
  an embedding. Define linear orders $\le^A_1$ and $\le^A_2$ on $A$ as follows:
  $$
    a \mathrel{\le_i^A} b \text{ iff } f(a) \mathrel{\preceq_i} f(b) \text{ in } \calQ_\refl, \quad i \in \{1, 2\}.
  $$
  Then it is easy to check that $\Boxed{\preceq^A} = (\Boxed{\le_1^A}) \cap (\Boxed{\le_2^A})$ proving that
  $(A, \Boxed{\preceq^A}) \in \KK_{\le 2}$.
  
  $(\supseteq)$ Take any $(A, \Boxed{\preceq^A}) \in \KK_{\le 2}$ and let $\le^A_1$ and $\le^A_2$ be linear orders on $A$
  such that $\Boxed{\preceq^A} = (\Boxed{\le_1^A}) \cap (\Boxed{\le_2^A})$. Then $(A, \Boxed{\le_1^A}, \Boxed{\le_2^A})$
  is a finite permutation, so there is an embedding $f : (A, \Boxed{\le_1^A}, \Boxed{\le_2^A}) \hookrightarrow (\QQ, \Boxed{\preceq_1}, \Boxed{\preceq_2})$
  because $(\QQ, \Boxed{\preceq_1}, \Boxed{\preceq_2})$ is the \Fraisse\ limit of the class of all finite permutations.
  Then it is easy to check that $f$ is also an embedding $f : (A, \Boxed{\preceq^A}) \hookrightarrow (\QQ, \Boxed\preceq)$,
  so $(A, \Boxed{\preceq^A}) \in \Age(\QQ, \Boxed\preceq)$.
  
  \bigskip
  
  Claim 2. $(\QQ, \Boxed\preceq)$ is weakly injective for the class $\KK_{\le2}$.
  
  Proof. Take any $(A, \Boxed{\preceq^A}) \in K_{\le 2}$ and any embedding $f : (A, \Boxed{\preceq^A}) \hookrightarrow (\QQ, \Boxed{\preceq})$.
  Define linear orders $\le_1^A$ and $\le_2^A$ on $A$ as follows:
  $$
    a \mathrel{\le_i^A} b \text{ iff } f(a) \mathrel{\preceq_i} f(b) \text{ in } \calQ_\refl, \quad i \in \{1, 2\}.
  $$
  Then $\Boxed{\preceq^A} = (\Boxed{\le_1^A}) \cap (\Boxed{\le_2^A})$, so
  $e : (A, \Boxed{\preceq^A}) \to (A, \Boxed{\le_1^A}) \times (A, \Boxed{\le_2^A})$ given by $e(a) = (a, a)$ is an embedding.
  
  Now, take any $(B, \Boxed{\preceq^B}) \in \KK_{\le 2}$ and any embedding
  $g : (A, \Boxed{\le_1^A}) \times (A, \Boxed{\le_2^A}) \hookrightarrow (B, \Boxed{\preceq^B})$.
  Since the dimension of $(B, \Boxed{\preceq^B})$ is $\le 2$, there are finite linear orders $\le^B_1$ and $\le^B_2$ on $B$
  such that $\Boxed{\preceq^B} = (\Boxed{\le_1^B}) \cap (\Boxed{\le_2^B})$ and
  $\ell : (B, \Boxed{\preceq^B}) \to (B, \Boxed{\le_1^B}) \times (B, \Boxed{\le_2^B})$ given by $\ell(b) = (b,b)$ is an embedding.
  Then
  $$
    \ell \circ g : (A, \Boxed{\le_1^A}) \times (A, \Boxed{\le_2^A}) \hookrightarrow (B, \Boxed{\le_1^B}) \times (B, \Boxed{\le_2^B})
  $$
  is an embedding, so by Lemma~B one of the two maps bellow is an embedding:
  \begin{itemize}
  \item $\ell \circ g : (A \times A, \Boxed{\lex^{A \times A}}, \Boxed{\alex^{A \times A}}) \to (B \times B, \Boxed{\lex^{B \times B}}, \Boxed{\alex^{B \times B}})$ or
  \item $\ell \circ g : (A \times A, \Boxed{\lex^{A \times A}}, \Boxed{\alex^{A \times A}}) \to (B \times B, \Boxed{\alex^{B \times B}}, \Boxed{\lex^{B \times B}})$.
  \end{itemize}
  Without loss of generality we can assume that $\ell \circ g : (A \times A, \Boxed{\lex^{A \times A}}, \Boxed{\alex^{A \times A}})
  \hookrightarrow (B \times B, \Boxed{\alex^{B \times B}}, \Boxed{\lex^{B \times B}})$ is an embedding.
  
  To conclude the proof, note that $e$ given as above is also an embedding
  $e : (A, \Boxed{\le_1^A}, \Boxed{\le_2^A}) \hookrightarrow (A \times A, \Boxed{\lex^{A \times A}}, \Boxed{\alex^{A \times A}})$,
  and that, by construction of $\Boxed{\le_1^A}$ and $\Boxed{\le_2^A}$, $f$ is also an embedding
  $f : (A, \Boxed{\le_1^A}, \Boxed{\le_2^A}) \hookrightarrow (\QQ, \Boxed{\preceq_1}, \Boxed{\preceq_2})$. Since
  $(\QQ, \Boxed{\preceq_1}, \Boxed{\preceq_2})$ is the \Fraisse\ limit of the class of all finite permutations, there is
  an embedding $k : (B \times B, \Boxed{\alex^{B \times B}}, \Boxed{\lex^{B \times B}}) \hookrightarrow (\QQ, \Boxed{\preceq_1}, \Boxed{\preceq_2})$
  such that $k \circ (\ell \circ g \circ e) = f$:
  \begin{center}
    \begin{tikzcd}[column sep=large]
	  (A, \Boxed{\le_1^A}, \Boxed{\le_2^A}) \arrow[d, hookrightarrow, "f"'] \arrow[r, hookrightarrow, "\ell \circ g \circ e"] & (B \times B, \Boxed{\alex^{B \times B}}, \Boxed{\lex^{B \times B}}) \arrow[dl, hookrightarrow, dashed, "k"]\\
	  (\QQ, \Boxed{\preceq_1}, \Boxed{\preceq_2})
	\end{tikzcd}
  \end{center}
  Again, it is easy to verify that $k$ is also an embedding
  $$
    k : (B \times B, \Boxed{\alex^{B \times B}} \cap \Boxed{\lex^{B \times B}}) \hookrightarrow (\QQ, \Boxed{\preceq}),
  $$
  so recalling that $(B \times B, \Boxed{\alex^{B \times B}} \cap \Boxed{\lex^{B \times B}}) = (B \times B, \Boxed{\sqsubseteq^{B \times B}})
  = (B, \Boxed{\le_1^B}) \times (B, \Boxed{\le_2^B})$, we conclude that the following holds in the category of posets of dimension
  at most~2:
  \begin{center}
    \begin{tikzcd}
	  (A, \Boxed{\preceq^A}) \arrow[d, hookrightarrow, "f"'] \arrow[r, hookrightarrow, "e"] & (A, \Boxed{\le_1^A}) \times (A, \Boxed{\le_2^A}) \arrow[r, hookrightarrow, "g"] & (B, \Boxed{\preceq^B}) \arrow[dll, hookrightarrow, dashed, "k \circ \ell"]\\
	  (\QQ, \Boxed{\preceq})
	\end{tikzcd}
  \end{center}
  This concludes the proof.
\end{proof}

\paragraph{Open problem 1.}
Extend the main results of Section~6 and this Corrigendum to classes of relational
structures of the form $(A, \Boxed{<_1}, \ldots, \Boxed{<_n})$ where
$\Boxed{<_1}$, \ldots, $\Boxed{<_n}$ are linear orders on $A$, $n \ge 3$.

\bigskip

The occurrence of the weak amalgamation property in this setting is not incidental.
Indeed, suppose that $\calK$ is a countable relational structure with big Ramsey degrees.
It then follows by compactness that the finite structures in $\Age(\calK)$ have small Ramsey degrees.
Since the joint embedding property is immediate for $\Age(\calK)$, Theorem 3.2 of \cite{masul-zucker-2024}
implies that $\Age(\calK)$ has the weak amalgamation property and
we are precisely in the framework considered here.

\paragraph{Open problem 2.} Find more examples of weak \Fraisse\ limits with big Ramsey degrees.

\section*{Addendum}

The \emph{profile} of a class $\KK$ of finite relational structures is the integer-valued function $\phi_{\KK} : \NN \to \NN$
that assigns to each nonnegative integer $n$ the number of structures in $\KK$ with $n$ elements, counted up to isomorphism.
The behavior of this function has been extensively studied, particularly in the case where $\KK$ is \emph{hereditary}
(i.e., closed under taking substructures) and consists of objects such as graphs (directed or undirected), tournaments,
ordered sets, ordered graphs, or ordered hypergraphs. Moreover, a result of Cameron~\cite{Cameron2003} shows that the study
of permutations—originating from the Stanley--Wilf conjecture and its resolution by Marcus and Tardos~\cite{MarcusTardos2004}—fits naturally into the framework of profiles of hereditary classes of ordered relational structures (see~\cite{OudrarPouzet2011,OudrarPouzet2016}).

These results demonstrate that the profile cannot exhibit arbitrary growth: there are ``jumps'' in the possible growth rates. Typically, the growth is either polynomial or faster than any polynomial (see~\cite{Pouzet1978,Pouzet2006}). For several classes of structures, the profile grows at least exponentially (e.g., tournaments~\cite{BaloghBollobasMorris2007,BoudabousPouzet2010}, ordered graphs and hypergraphs~\cite{BaloghBollobasMorris2006a,Klazar2008}, and permutations~\cite{KaiserKlazar2003}) or at least as fast as the partition function (e.g., graphs~\cite{BaloghBollobasSaksSos2009}). For further details, see the survey by Klazar~\cite{Klazar2010}.  

As noted in~\cite{oudrar-pouzet-2026}, there are very few cases for which the profile $\phi_\KK$ of a hereditary class
$\KK$ is actually a polynomial. This is why we introduce the following relaxation: we say that $\phi_\KK$ is
\emph{eventually polynomial} if for some non-negative integer $n$, the restriction of $\phi_\KK$ to
$\{n, n+1, n+2, \ldots\}$ is a polynomial function.
In particular, Pouzet and Oudrar proved the following in \cite{oudrar-pouzet-2026}, where an \emph{interval decomposition}
of an ordered relational structure is a monomorphic decomposition of the structure into intervals:

\begin{THMD} \cite[cf.~Theorem 1.1]{oudrar-pouzet-2026}
Let $\KK$ be a hereditary class of finite ordered relational structures over a finite relational language. Then:
\begin{itemize}
    \item
      either there exists an integer $k$ such that every member of $\KK$ has an interval decomposition into at most $k+1$
      blocks, in which case $\KK$ is a finite union of ages of ordered relational structures, each having an interval decomposition
      into at most $k+1$ blocks, and the profile of $\KK$ is eventually polynomial (of degree at most $k$);
    \item
      or the profile of\/ $\KK$ is at least exponential.
\end{itemize}
\end{THMD}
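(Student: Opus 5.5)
Theorem D is quoted from Oudrar and Pouzet \cite{oudrar-pouzet-2026}, and I will not reprove it here. Still, the route I would take is as follows. Split according to whether the members of $\KK$ have interval decompositions of uniformly bounded size.

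\emph{Bounded case.} Fix $k$ such that every $\calA \in \KK$ has an interval decomposition into at most $k+1$ blocks. First I show that $\KK$ is a finite union of ages. Order $\KK$ by embeddability; by compactness, every ideal (directed hereditary subclass) of $\KK$ is the age of a countable ordered structure. That structure inherits an interval decomposition into at most $k+1$ blocks by a König-type limiting argument. Such structures are coded by finite data: the block sizes and the finite type of how the relations behave across blocks. This suggests they are well-quasi-ordered, via Higman's lemma applied to words of bounded length over a finite alphabet. Hence $\KK$ has no infinite antichain, so it is a finite union of ideals.

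For the profile, an isomorphism type in one such age is determined by a bounded amount of combinatorial data together with the sizes $n_1, \ldots, n_{k+1}$ of the blocks, where $n_1 + \cdots + n_{k+1} = n$. Counting these by inclusion--exclusion over the finitely many ages and over collapses of blocks gives a quasi-polynomial of degree at most $k$. Because the blocks are intervals of the linear order, there are no symmetries between blocks. I would then argue that the quasi-polynomial is eventually a polynomial, which would follow from the counting sets being eventually closed under shifting block sizes by one.

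\emph{Unbounded case.} This is the main obstacle. Here I would use Pouzet's Ramsey-theoretic classification: from members needing arbitrarily many interval blocks, extract arbitrarily long sequences of alternations across the linear order. Ramsey's theorem in the finite language makes these sequences uniform, yielding one of finitely many canonical unbounded configurations, such as an embedded long alternating pattern. Each such configuration can be varied independently at linearly many positions, which gives $2^{cn}$ pairwise non-isomorphic $n$-element substructures. The hardest step is the canonization: showing that failure of a bounded interval decomposition forces one of these exponential patterns, rather than merely superpolynomial growth.
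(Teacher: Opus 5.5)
This matches the paper's treatment: Theorem D is not proved there but imported from Oudrar and Pouzet and used only as a black box (together with K\H{o}nig's lemma) in the proof of Theorem E, so deferring to the citation is the right move. Your accompanying sketch is not a substitute for the citation, though: the unbounded case (showing that unbounded interval decompositions force an at least exponential profile) is the real content of their theorem and is left unargued, and in the bounded case the step from a quasi-polynomial to an eventually polynomial profile is only asserted.
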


\begin{THME}
  Let $\calS$ be a countable relational structure with a total order relation $<$ in its language.
  If the profile of $\Age(\calS)$ is eventually polynomial, then:
  
  $(a)$ $\calS$ has a finite monomorphic decomposition, and
  
  $(b)$ $\calS$ has finite big Ramsey degrees if and only if every substructure of $\calS$ induced by a block in its
  minimal monomorphic decomposition is chained by a chain having finite big Ramsey degrees.
\end{THME}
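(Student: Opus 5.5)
Part $(a)$ comes from Theorem~D applied to $\KK = \Age(\calS)$. This class is hereditary and consists of finite ordered structures. If the language is infinite, we first reduce to a finite one. Growth of the profile only forces a finite sublanguage to matter on each finite set, so I will assume, as in the setting of Theorem~D, that the language is finite.

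Since the profile is eventually polynomial, it is not at least exponential. By Theorem~D there is a $k$ such that every member of $\Age(\calS)$ has an interval decomposition into at most $k+1$ blocks. Next I pass to $\calS$ itself. Take an increasing chain of finite substructures $\calS_0 \subseteq \calS_1 \subseteq \cdots$ exhausting $\calS$. Each $\calS_n$ has a monomorphic decomposition into at most $k+1$ intervals, and the minimal decompositions are coarsenings compatible with restriction. A König's lemma / compactness argument then yields a monomorphic decomposition of $\calS$ into at most $k+1$ blocks. Concretely, the equivalence relation ``$x \equiv y$ iff $x,y$ lie in the same block of the minimal decomposition of every $\calS_n$ containing both'' has at most $k+1$ classes. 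Monomorphy of the limit blocks is a finitary property, so it transfers from the finite stages. Alternatively, one can invoke the Pouzet--Thi\'ery result that a structure has a finite monomorphic decomposition iff its age has bounded-size decompositions. I expect this to be the delicate point of $(a)$: making sure the number of blocks does not grow along the chain. This follows from the uniform bound $k+1$ together with the fact that the minimal decomposition of a substructure is induced by (refines the trace of) that of the superstructure.

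For $(b)$, I invoke the main characterization from the earlier sections of the paper. For a countable structure admitting a finite monomorphic decomposition, finite big Ramsey degrees hold iff each block of the minimal monomorphic decomposition is chained by a chain with finite big Ramsey degrees. By $(a)$ this characterization applies directly to $\calS$. Because $<$ is in the language, each monomorphic block is chained by the restriction of $<$ itself. So the condition reads naturally, and the equivalence is exactly the earlier theorem, needing only the check that its hypotheses (countability, finite monomorphic decomposition) are met. The main obstacle is therefore entirely in $(a)$, specifically the passage from uniformly bounded decompositions of finite substructures to a decomposition of $\calS$.
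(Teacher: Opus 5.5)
Your part $(b)$ is exactly the paper's: once $(a)$ holds, Theorems~4.4 and~4.8 give the equivalence. For $(a)$ you take a genuinely different route, but the fact you rely on for the delicate step is stated backwards.

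It is not true that the minimal monomorphic decomposition of a substructure is induced by, or refines, the trace of the superstructure's minimal decomposition. The inclusion goes the other way. Take $\calS = (\QQ, <, U)$ with $U = \{0\}$, $\calS_0 = \{-1, 1\}$ and $\calS_1 = \{-1, 0, 1\}$. The minimal decomposition of $\calS_0$ is a single block. That of $\calS_1$ consists of three singletons: $0$ is the only point of $U$, and $\{-1,0\} \not\cong \{0,1\}$. So the number of blocks can grow along the chain, contrary to what you say must be ensured.

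What is true is the following. If $\calB \le \calC$ and $P$ is any monomorphic decomposition of $\calC$, the nonempty traces $X \cap B$, $X \in P$, form a monomorphic decomposition of $\calB$. Hence they refine the minimal decomposition of $\calB$, since every monomorphic decomposition refines the minimal one. This has several consequences:
\begin{enumerate}
\item The minimal decompositions $P_n$ of $\calS_n$ become finer as $n$ grows.
\item Each $P_n$ has at most $k+1$ blocks, because the interval decomposition from Theorem~D is monomorphic and refines it.
\item So $|P_n|$ is nondecreasing and bounded, hence constant from some $n_0$ on. From then on $P_n$ is exactly the trace of $P_{n+1}$.
\item The union of this coherent system is a partition of $\calS$ into at most $k+1$ blocks. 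It is monomorphic: two finite sets balanced with respect to it lie in some $\calS_n$ with $n \ge n_0$, where they are balanced with respect to $P_n$.
\end{enumerate}
Your relation $\equiv$ is precisely this limit. Your bound of $k+1$ classes holds because separated pairs stay separated, which requires the correct direction, not the one you state.

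With this repair the argument is sound and compares usefully with the paper's. The paper works with interval decompositions, which are not canonical. It therefore builds the finitely branching tree of all interval decompositions of the $\calS_i$ into at most $k+1$ blocks and extracts a coherent branch with K\H onig's lemma. The payoff is that the limit is again an interval decomposition. You use the canonical coarsest monomorphic decomposition, which collapses the inverse system to a single chain and makes K\H onig unnecessary. The cost is reliance on the Pouzet--Thi\'ery existence theorem, and the decomposition you obtain need not consist of intervals; that is harmless, since $(a)$ only asks for a finite monomorphic decomposition. Finally, your reduction from an infinite to a finite language is asserted without proof. The paper likewise tacitly stays within the finite-language setting of Theorem~D, so this is not where you diverge from it.
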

\begin{proof}
  $(b)$ follows from $(a)$ by Theorems~4.4 and~4.8, while $(a)$ follows from Theorem~D by an easy application of K\H onig's lemma.
  Let us quickly sketch the argument for $(a)$.
  
  Let $\calS$ be a countable first-order structure with a total order relation $<$ in its language,
  and assume that the profile of $\KK = \Age(\calS)$ is eventually polynomial. Then by Theorem~D we know that
  there exists an integer $k$ such that every member of $\KK$ has an interval decomposition into at most $k+1$
  blocks. Note that if $\calB \le \calC$ are two finite substructures of $\calS$,
  then every interval decomposition of $\calC$ into at most $k+1$ blocks has a restriction that is
  an interval decomposition of $\calB$ into at most $k+1$ blocks.
  Now, take any sequence $\calS_1 \le \calS_2 \le \calS_3 \le \ldots$ of finite substructures of $\calS$
  such that $\bigcup_{i \in \NN} \calS_i = \calS$ and construct a finitely branching tree as follows:
  the root of the tree is the trivial (formal) interval decomposition $\{\0\}$ of the empty set;
  the level $i$ of the tree consists of all interval decompositions of $\calS_i$;
  each interval decomposition of $\calS_i$ is joined by an edge to its restriction to $\calS_{i-1}$.
  By K\H onig's lemma there is an infinite branch. Since each decomposition extends the one that
  precedes it, it is clear how to take the supremum along this branch to get a
  valid interval decomposition of $\calS$ into at most $k+1$ blocks.
  Finally, recall that an interval decomposition into finitely many blocks is a special finite monomorphic decomposition.
\end{proof}

\section*{Acknowledgements}

The authors are grateful to Jan Hubi\v cka for bringing to our attention the fact
that the reduct of the generic permutation we are interested in is not the generic
partial order.

\end{document}